\numberwithin{equation}{section}
\newcommand{\ra}{\rightarrow}
\newcommand{\e}{\epsilon}
\newcommand{\R}{{\mathbb R}}
\newcommand{\X}{{\mathbb X}}
\newcommand{\pd}{\partial}
\newcommand{\grad}{\operatorname{grad}}
\newcommand{\curl}{\operatorname{curl}}
\newcommand{\rot}{\operatorname{rot}}
\renewcommand{\div}{\operatorname{div}}
\newcommand{\sym}{\operatorname{sym}}
\newcommand{\skw}{\operatorname{skw}}
\newcommand{\tr}{\operatorname{tr}}
\newtheorem{theorem}{Theorem}[section]
\newtheorem{lemma}[theorem]{Lemma}
\newtheorem{corollary}[theorem]{Corollary}
\newtheorem{definition}[theorem]{Definition}
\def\be{\begin{equation*}}
\def\ee{\end{equation*}}
\begin{document}

\title[Unified analysis of mixed methods for elasticity]{Towards a unified analysis of mixed methods for elasticity with weakly symmetric stress}
\author{Jeonghun J. Lee
}


\address{University of Oslo
Department of Mathematics \\
P.O. Box 1053, Blindern 0316, Norway \\
jeonghul@math.uio.no}

\keywords{linear elasticity, weakly symmetric stress, mixed finite elements, error analysis}
\subjclass{65N30, 65N12}

\maketitle

\begin{abstract}
We propose a framework for unified analysis of mixed methods for elasticity with weakly symmetric stress. Based on a commuting diagram in the weakly symmetric elasticity complex and extending a previous stability result, stable mixed methods are obtained by combining Stokes stable and elasticity stable finite elements. We show that the framework can be used to analyze most existing mixed methods for the elasticity problem with elementary techniques. We also show that some new stable mixed finite elements are obtained.
\end{abstract}


\section{Introduction}

In the Hellinger--Reissner formulation of linear elasticity, for a given external body force and boundary conditions, the stress and displacement are sought as a saddle point of the Hellinger--Reissner functional. In this saddle point problem the stress tensor is directly obtained without suffering from volumetric locking in nearly incompressible materials \cite{ADG84}. However, the symmetry condition of the stress tensor gives a highly nontrivial obstacle in finding stable mixed finite elements for the saddle point problem. Another way to find mixed methods for the problem is to impose the symmetry condition weakly by requiring the stress to be orthogonal against a certain space of skew-symmetric tensors \cite{AmaraThomas,Fraeijs}. This alternative approach turned out to be successful and various stable mixed finite elements have been developed based on this idea \cite{ABD84,AFW07,MR2449101,CGG10,FalkWS,MR1464150,GG10,morley89,Sten86,Sten88}. In this paper we will call them weak symmetry elements. 

There are several different ways to analyze the stability of weak symmetry elements. In early research \cite{ABD84,MR1464150} a connection between the Stokes equation and the linear elasticity equation with weak symmetry is used for the proof of stability. An analysis using mesh-dependent norms is also proposed \cite{Sten86,Sten88}. A breakthrough is made in \cite{AFW07} in the development of a weakly symmetric elasticity complex. In \cite{AFW07} the Arnold--Falk--Winther (AFW) family is developed and the stability is proved by commuting diagram properties in the elasticity complex and a diagram chasing type argument in homological algebra. In \cite{MR2449101} an analysis, based on a connection with the Stokes equation, is revived with a commuting diagram in the weakly symmetric elasticity complex \cite{AFW07}, and the stability proof of the AFW family is reduced to proving existence of an interpolation operator satisfying some conditions. As a consequence, new elements are developed and an alternative stability proof is proposed in \cite{MR2449101}. This idea is adopted in \cite{CGG10,GG10} to construct new elements with the aid of cleverly-designed matrix bubble functions and some results in \cite{AFW07}. 

The aforementioned ways for the stability proof, although they are interesting, are not elementary, and some of them need sophisticated concepts which are not familiar to many numerical analysts and engineers. The goal of this paper is to provide a unified framework for the analysis of weak symmetry elements with {\it elementary techniques}. It is worth to mention that there is a similar attempt with mesh-dependent norms \cite{Sten14}, which can be useful for developing mixed discontinuous Galerkin methods for the problem. In contrast, we revisit the approach in \cite{MR2449101} and extend it using an idea in \cite{Guzman11}. We establish an abstract framework with this simple extension and show that various known results can be easily recovered. For example, in our approach, the stability of variable degree finite elements on affine meshes \cite{MR2557491,MR2792388} is proved without special interpolation operators. 

Another contribution of our work is to prove an improved error estimate, which was available only for several families of elements \cite{AFW07,CGG10,GG10,Guzman11,Sten88}, for a wider class of elements. We prove that the improved error estimate holds for {\it all} elements satisfying several simple conditions and we shall show through examples that most known finite elements, including variable degree elements, satisfy the conditions. It is worth mentioning that this improved error estimate leads to existence of weakly symmetric elliptic projection, which is a key tool for error analysis of time-dependent problems \cite{Arnold-Lee}.

This paper is organized as follows. In section 2, we summarize notation and review the Hellinger--Reissner formulation of linear elasticity with weakly symmetric stress. In section 3, we introduce an abstract framework for unified analysis and prove an improved a priori error estimates. In section 4, we give examples of weak symmetry elements to which the abstract framework can be applied. 

\section{Preliminaries}
\subsection{Notation} Let $\Omega$ be a bounded domain in $\R^n$ ($n=2,3$) with a Lipschitz boundary. We use $H^m (\Omega)$, $m \geq 0$ to denote standard Sobolev spaces \cite{Evans-book} based on the $L^2$ norm ($H^0(\Omega) = L^2(\Omega)$) and for a finite dimensional inner product space $\X$, $H^m(\X)$ is the space of $\X$--valued functions such that each component is in $H^m(\Omega)$. The associated norm is denoted with $\| \cdot \|_m$. For $p, q \in L^2(\X)$ we will use $(p , q)$ to denote the $L^2$ inner product. We denote the spaces of all, symmetric, and skew-symmetric $n \times n$ matrices by $\R^{n \times n}$, $\R_{\sym}^{n \times n}$, and $\R_{\skw}^{n \times n}$, respectively.

We use $\grad$ and $\div$ to denote the standard gradient and divergence operators. However, we use $\curl$ to denote two different operators for different $n$, namely, if $n=2$, then 
\begin{align*}
\curl : H^1(\R) \ra L^2(\R^2), \qquad \curl \phi = (-\pd_{x_2} \phi \quad \pd_{x_1} \phi),
\end{align*}
and if $n=3$, then $\curl$ is the standard three dimensional curl operator. 

By $H(\div)$ we denote the space of square integrable $\R^n$-valued functions on $\Omega$ such that the divergence of functions is also square integrable and the $H(\div)$ norm is defined by $\| \tau \|_{\div}^2 = \| \tau \|_0^2 + \| \div \tau \|_0^2$. The $H(\curl)$ space and $\| \cdot \|_{\curl}$ are defined similarly if $n=3$. When we apply the operators $\grad$, $\div$, $\curl$ to a matrix-valued or vector-valued function the operations need to be well-defined as row-wise operators. By $H(\div; \R^n)$ we denote the space of functions in $L^2(\R^{n \times n})$ such that each row is in $H(\div)$. The space $H(\curl;\R^3)$ is defined similarly for $n=3$. The $H(\div)$ and $H(\curl)$ norms of the spaces are naturally defined.

\subsection{Hellinger--Reissner formulation of linear elasticity}

For a given displacement $u : \Omega \rightarrow \R^n$, the linear strain tensor $\epsilon(u)$ is 
\begin{align*}
\e(u) = \frac{1}{2} ( \grad u + (\grad u)^T),
\end{align*} 
where $(\grad u)^T$ is the transpose of $\grad u$. 
From generalized Hooke's law the stress tensor is $\sigma = C \epsilon(u)$ where $C$ is the stiffness tensor such that $C(x) : \R_{\sym}^{n \times n} \ra \R_{\sym}^{n \times n}$ for all $x \in \Omega$ and 
\begin{align*}
c_0 \tau : \tau \leq C(x) \tau : \tau \leq c_1 \tau : \tau ,\quad \tau \in \R_{\sym}^{n \times n},
\end{align*}
with positive constants $c_0, c_1$ independent of $x \in \Omega$. For each $x \in \Omega$, $C(x)^{-1}$ is also bounded and positive definite. If an elastic medium is isotropic, then $C^{-1}\tau$ has the form
\begin{align}
\label{compliance} C^{-1} \tau = \frac{1}{2\mu} \left( \tau - \frac{\lambda}{2\mu + n\lambda} \tr(\tau) I \right),
\end{align}
where $\mu(x), \lambda(x) >0$ are the Lam\'{e} parameters, $\tr(\tau)$ is the trace of $\tau$, and $I$ is the identity matrix. 

Throughout this paper we assume the homogeneous displacement boundary condition $u=0$ on $\pd \Omega$ for simplicity. For a given $f \in L^2(\R^n)$, the Hellinger--Reissner functional $\mathcal{J} : (H(\div ; \R^n) \cap L^2(\R_{\sym}^{n \times n})) \times L^2(\R^n) \ra \R$ is defined by 
\begin{align}
\label{J}
\mathcal{J}(\tau, v) = \int_{\Omega} \left( \frac{1}{2} C^{-1} \tau : \tau + \div \tau \cdot v - f \cdot v \right ) dx,
\end{align}
and it is known that $\mathcal{J}$ has a unique critical point 
\begin{align*}
(\sigma, u) \in H(\Omega, \div ; \R_{\sym}^{n \times n}) \times L^2(\Omega; \R^n),
\end{align*} 
which is the solution of the elasticity problem with the boundary condition $u = 0$. 

For the approach with weakly imposed symmetry of stress we define $A$ as the extension of $C^{-1}$ on $\R^{n \times n}$ such that $A$ is the identity map for skew-symmetric matrices. We define function spaces $\Sigma$, $U$, and $\Gamma$ by
\begin{gather*}
\Sigma =  H(\div; \R^n), \quad U = L^2(\R^n), \quad \Gamma = L^2(\R_{\skw}^{n \times n}),
\end{gather*}
and a functional $\tilde {\mathcal{J}} : \Sigma \times U \times \Gamma \ra \R$ by
\begin{gather}
\label{J-tilde}
\tilde {\mathcal{J}} (\tau, v, \eta) = \int_{\Omega} \left( \frac{1}{2} A \tau : \tau + \div \tau \cdot v + \tau : \eta  - f \cdot v \right ) dx.
\end{gather}
The functional $\tilde {\mathcal{J}}$ has a unique critical point $(\sigma, u, \gamma)$ (see \cite{AFW06}) and the first two components coincide with the critical point of $\mathcal{J}$ in \eqref{J}. By variational methods, the critical point $(\sigma, u, \gamma)$ of $\tilde {\mathcal{J}}$ satisfies 
\begin{align}
\label{eq:weak-eq1} (A \sigma, \tau) + (u, \div \tau) + (\gamma, \tau) &= 0, & & \tau \in \Sigma, \\
\label{eq:weak-eq2} -(\div \sigma, v) &= (f, v), & & v \in U, \\
\label{eq:weak-eq3} (\sigma, \eta) &= 0, & & \eta \in \Gamma. 
\end{align}
The associated discrete problem with finite element spaces $\Sigma_h \times U_h \times \Gamma_h \subset \Sigma \times U \times \Gamma$ is seeking $(\sigma_h, u_h, \gamma_h) \in \Sigma_h \times U_h \times \Gamma_h$ such that 
\begin{align}
\label{eq:weak-disc-eq1} (A \sigma_h, \tau) + (u_h, \div \tau) + (\gamma_h, \tau) &= 0, & & \tau \in \Sigma_h, \\
\label{eq:weak-disc-eq2} -(\div \sigma_h, v) &= (f, v), & & v \in U_h, \\
\label{eq:weak-disc-eq3} (\sigma_h, \eta) &= 0, & & \eta \in \Gamma_h. 
\end{align}
In this approach the numerical stress $\sigma_h$ is not symmetric but is weakly symmetric due to the last equation of the above.

\section{Abstract framework}
In this section we introduce an abstract framework for unified analysis of weak symmetry elements. This is a generalization of the approach in \cite{JL1} with inspirations from \cite{MR2449101,Guzman11}. 

Throughout this paper $c$ is a generic positive constant independent of mesh sizes. We first recall the Babu\v{s}ka--Brezzi stability conditions for \eqref{eq:weak-disc-eq1}--\eqref{eq:weak-disc-eq3}, which are
\begin{itemize}
\item[\bf(S1)] There is $c$ such that 
\begin{align*}
c\| \tau \|_{\div}^2 \leq (A \tau, \tau), 
\end{align*}
for $\tau \in \Sigma_h$ satisfying $(\div \tau, v) + (\tau, \eta) = 0$ for all $(v, \eta) \in U_h \times \Gamma_h$.
\item[\bf (S2)] There is $c$ such that 
\begin{align*}
\inf_{0 \not = (v, \eta) \in U_h \times \Gamma_h} \sup_{0 \not = \tau \in \Sigma_h} \frac{(\div \tau, v) + (\tau, \eta)}{\| \tau \|_{\div} (\| v \|_0 + \| \eta \|_0)} \geq c . 
\end{align*}
\end{itemize}
Now we recall a commuting diagram of the elasticity complex in \cite{AFW07}. Let 
\begin{align*}
\Xi = 
\begin{cases}
H^1(\R^2)  & \text{if }n=2, \\
H(\curl;\R^3) &\text{if }n=3.
\end{cases}
\end{align*}
We also define $S$ and $\chi$ as
\begin{align*}
S 
\begin{pmatrix}
\xi_1 \\
\xi_2
\end{pmatrix} = \frac{1}{2}
\begin{pmatrix}
\xi_1 & \xi_2
\end{pmatrix} \quad \text{for } \xi \in \Xi, \quad \chi(r) = 
\begin{pmatrix}
0 & r \\
-r & 0
\end{pmatrix} \quad \text{for } r \in \R \qquad 
\text{ if } n = 2, \\
S \xi = \frac{1}{2}(\xi^T - (\tr \xi) I) \quad \text{for } \xi \in \Xi, \quad \chi 
\begin{pmatrix}
r_1 \\
r_2 \\
r_3
\end{pmatrix} = 
\begin{pmatrix}
0 & -r_3 & r_2 \\
r_3 & 0 & -r_1 \\
-r_2 & r_1 & 0
\end{pmatrix} \quad \text{if } n = 3. 
\end{align*} 
Note that $S$ and $\chi$ are invertible algebraic operators. One can verify by a direct computation that $S$ maps $\Xi$ to $H(\div)$ if $n=2$, and to $H(\div; \R^3)$ if $n=3$, so $\chi \div S$ maps $\Xi$ to $\Gamma$. One can also verify by a direct computation that 
\begin{align*}
\skw \curl \xi = \chi \div S \xi, \qquad \xi \in \Xi, 
\end{align*}
where $\skw \tau = (\tau - \tau^T)/2$ for $\tau \in L^2(\Omega; \R^{n \times n})$. For finite element spaces $\Xi_h \subset \Xi$, $\Gamma_h \subset \Gamma$, and the $L^2$ projection $Q_h$ into $\Gamma_h$, it holds that
\begin{align*}
Q_h \skw \curl \xi = Q_h \chi \div S \xi, \qquad \xi \in \Xi_h,
\end{align*}
which implies that the triangle in Figure \ref{diag1} commutes.
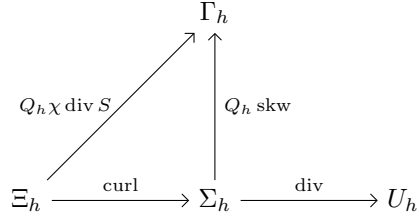
\begin{figure}[!ht] 
\centering
\begin{tikzpicture}[scale=2.5] 
\node (L2-skw) 	at (1,1) {$\Gamma_h$};
\node (Xi)		at (0,0) {$\Xi_h$};
\node (Hdiv-vec) 	at (1,0) {$\Sigma_h$};
\node (L2-vec)  		at (2,0) {$U_h$}; 
\path[->, font=\scriptsize, >=angle 90]
(Xi)		edge node[above] {$\curl$} (Hdiv-vec)
(Hdiv-vec) 	edge node[above] {$\div$} (L2-vec)
(Xi)		edge node[left] {$Q_h \chi \div S$}	(L2-skw)
(Hdiv-vec)	edge node[right] {$Q_h \skw$}	(L2-skw);
\end{tikzpicture}
\caption{A finite element version of the commuting diagram in the weakly symmetric elasticity complex}
\label{diag1}
\end{figure}
Note that the bottom row of this diagram is not necessarily an exact sequence.

\begin{definition}
A triple of finite elements $(\Sigma_h, U_h, R_h)$ is elasticity stable if $\Sigma_h \subset \Sigma$, $U_h \subset U$, $R_h \subset \Gamma$ and the following hold:
\begin{itemize}
\item[{\bf (A1)}] $\div \Sigma_h = U_h$
\item[{\bf (A2)}] There exists $c$ such that for any $(v, \rho) \in U_h \times R_h$, there exists $\tau \in \Sigma_h$ satisfying
\begin{align*}
\div \tau = v, \quad (\tau, \rho') = ( \rho, \rho') \;\;\forall \rho' \in R_h, \quad \| \tau \|_{\div} \leq c(\| v \|_0 + \| \rho \|_0).
\end{align*}

\end{itemize}
\end{definition}
It is not difficult to see that {\bf (A1)} implies {\bf (S1)} because $A$ is positive definite, and {\bf (A2)} implies {\bf (S2)}. However, an elasticity stable triple $(\Sigma_h, U_h, R_h)$ may not be an appropriate mixed finite element for \eqref{eq:weak-disc-eq1}--\eqref{eq:weak-disc-eq3} in the standard context. For instance, if $\Sigma_h =BDM_1(\R^2)$, $U_h = \mathcal{P}_0^d(\R^2)$, then $(\Sigma_h, U_h, 0)$ is elasticity stable. Thus an elasticity stable triple $(\Sigma_h, U_h, \Gamma_h)$ should have a reasonable order of approximation of $(\Sigma, U, \Gamma)$ to be an appropriate mixed finite element for \eqref{eq:weak-disc-eq1}--\eqref{eq:weak-disc-eq3}.

\begin{definition} Suppose that $\Xi_h \subset \Xi$, $R_h \subset \Gamma$ are finite element spaces and $Q_h : \Gamma \ra R_h$ is the $L^2$ projection. The pair $(\Xi_h, R_h)$ is Stokes stable if 

\begin{itemize}
\item[\bf (B)] There exists $c$ such that for any $\rho \in R_h$ there is $\xi \in \Xi_h$ satisfying 
\begin{align*}
(Q_h \chi \div S \xi, \rho) \geq c \| \rho \|_0^2, \qquad \| \curl \xi \|_0 \leq c \| \rho \|_0.
\end{align*}
\end{itemize}
\end{definition}
The condition {\bf (B)} implies that $Q_h \chi \div S : \Xi_h \ra R_h$ is surjective. Furthermore, for any $\rho \in R_h$, there exists $\xi \in \Xi_h$ such that $Q_h \chi \div S \xi = \rho$ and $\| \curl \xi \|_0 \leq c \| \rho \|_0$.

\begin{theorem} Let $\Xi_h \subset \Xi$, $\Sigma_h \subset \Sigma$, $U_h \subset U$, $\Gamma_h \subset \Gamma$ be four finite element spaces. For a subspace $\Gamma_h^0$ of $\Gamma_h$ its orthogonal complement is denoted by $\Gamma_h^1$. Suppose that $(\Sigma_h, U_h, \Gamma_h^0)$ is elasticity stable and $(\Xi_h, \Gamma_h^1)$ is Stokes stable with $\chi \div S \Xi_h \perp \Gamma_h^0$. Then $(\Sigma_h, U_h, \Gamma_h)$ is elasticity stable. 
\end{theorem}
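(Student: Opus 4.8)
The condition \textbf{(A1)} for $(\Sigma_h, U_h, \Gamma_h)$ is identical to the one assumed for $(\Sigma_h, U_h, \Gamma_h^0)$, since it constrains only $\Sigma_h$ and $U_h$; it is therefore inherited immediately, and the real content is to verify \textbf{(A2)}. The plan is a two-step construction exploiting the orthogonal decomposition $\Gamma_h = \Gamma_h^0 \oplus \Gamma_h^1$. Given data $(v, \rho) \in U_h \times \Gamma_h$, I would first split $\rho = \rho_0 + \rho_1$ with $\rho_0 \in \Gamma_h^0$ and $\rho_1 \in \Gamma_h^1$, and then build the required $\tau \in \Sigma_h$ as a sum $\tau = \tau_0 + \tau_1$, where $\tau_0$ carries the prescribed divergence together with the $\Gamma_h^0$--part, and $\tau_1$ is a divergence-free $\curl$ field that repairs the $\Gamma_h^1$--part.

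First I would apply \textbf{(A2)} for the elasticity stable triple $(\Sigma_h, U_h, \Gamma_h^0)$ to $(v, \rho_0)$, obtaining $\tau_0 \in \Sigma_h$ with $\div \tau_0 = v$, with $(\tau_0, \rho') = (\rho_0, \rho')$ for all $\rho' \in \Gamma_h^0$, and with $\| \tau_0 \|_{\div} \leq c(\| v \|_0 + \| \rho \|_0)$. Letting $Q_h^1$ denote the $L^2$ projection onto $\Gamma_h^1$, I would set $g = \rho_1 - Q_h^1 \skw \tau_0 \in \Gamma_h^1$, the residual in the $\Gamma_h^1$ direction left by $\tau_0$. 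By the Stokes stability of $(\Xi_h, \Gamma_h^1)$, in the form of the remark following that definition, there is $\xi \in \Xi_h$ with $Q_h^1 \chi \div S \xi = g$ and $\| \curl \xi \|_0 \leq c \| g \|_0$; I then take $\tau_1 = \curl \xi$, which lies in $\Sigma_h$ by the commuting diagram of Figure \ref{diag1}.

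Verifying that $\tau = \tau_0 + \tau_1$ satisfies \textbf{(A2)} is where the hypotheses do their work, and this is the step I expect to be the crux. The divergence is undisturbed because $\div \tau_1 = \div \curl \xi = 0$, so $\div \tau = v$. For the inner-product condition I would use that each $\rho'$ is skew-symmetric, whence $(\tau_1, \rho') = (\skw \curl \xi, \rho') = (\chi \div S \xi, \rho')$ through the identity $\skw \curl = \chi \div S$. On $\Gamma_h^0$ this vanishes by the hypothesis $\chi \div S \Xi_h \perp \Gamma_h^0$, so $\tau_1$ does not spoil the $\Gamma_h^0$--condition already secured by $\tau_0$; on $\Gamma_h^1$ it equals $(g, \rho')$, which by the definition of $g$ exactly cancels the uncontrolled contribution $(Q_h^1 \skw \tau_0, \rho')$ of $\tau_0$ and leaves $(\tau, \rho') = (\rho_1, \rho') = (\rho, \rho')$. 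The main obstacle is precisely this coupling: a naive separate treatment of $\rho_0$ and $\rho_1$ fails because the elasticity-stable $\tau_0$ generally carries a nonzero component against $\Gamma_h^1$, and it is the ordering—fix the divergence and the $\Gamma_h^0$--part first, then repair $\Gamma_h^1$ with a correction that is simultaneously divergence-free and orthogonal to $\Gamma_h^0$—that decouples the two constraints. Finally the norm estimate is routine: $\| g \|_0 \leq \| \rho_1 \|_0 + \| \skw \tau_0 \|_0 \leq c(\| v \|_0 + \| \rho \|_0)$, so $\| \tau_1 \|_{\div} = \| \curl \xi \|_0 \leq c \| g \|_0$ and $\| \tau \|_{\div} \leq \| \tau_0 \|_{\div} + \| \tau_1 \|_{\div} \leq c(\| v \|_0 + \| \rho \|_0)$, completing \textbf{(A2)}.
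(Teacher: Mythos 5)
Your proof is correct and follows essentially the same route as the paper: apply \textbf{(A2)} for $(\Sigma_h, U_h, \Gamma_h^0)$ to get $\tau_0$, then use the surjectivity form of Stokes stability to produce $\xi \in \Xi_h$ whose curl corrects the $\Gamma_h^1$ residual, exploiting $\skw\curl = \chi \div S$ and the orthogonality $\chi \div S\, \Xi_h \perp \Gamma_h^0$. The only cosmetic difference is writing $Q_h^1 \skw \tau_0$ where the paper writes $Q_h^1 \tau_0$ (these coincide since $\Gamma_h^1$ consists of skew-symmetric fields), and organizing the verification by splitting the contributions of $\tau_0$ and $\curl\xi$ rather than combining them.
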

\begin{proof}
Since we assume that $(\Sigma_h, U_h, \Gamma_h^0)$ is elasticity stable, we only need to check {\rm \bf{(A2)}} to show that $(\Sigma_h, U_h, \Gamma_h)$ is elasticity stable. Let $v \in U_h$, $\eta = \eta_0 + \eta_1 \in \Gamma_h^0 \oplus \Gamma_h^1$ be given. By {\bf (A2)} for $(\Sigma_h, U_h, \Gamma_h^0)$ there is $\tau_0 \in \Sigma_h$ such that 
\begin{gather*}
\| \tau_0 \|_{\div} \leq c (\| v \|_0 + \| \eta \|_0), \qquad 
\div \tau_0 = v, \qquad (\tau_0, \eta_0') = (\eta_0, \eta_0'), \quad \forall \eta_0' \in \Gamma_h^0.
\end{gather*}
Let $Q_h^1$ be the $L^2$ projection into $\Gamma_h^1$. Since $(\Xi_h, \Gamma_h^1)$ is Stokes stable, there exists $\xi \in \Xi_h$ such that $Q_h^1 \chi \div S \xi = \eta_1 - Q_h^1 \tau_0$ and $\| \curl \xi \|_0 \leq c \| \eta_1 - Q_h^1 \tau_0 \|_0$. We take $\tau = \tau_0 + \curl \xi$ and check that the conditions in {\bf (A2)} holds. Since $\div \tau_0 = v$,   
\begin{align*}
\div \tau = \div \tau_0 + \div \curl \xi = \div \tau_0 = v.
\end{align*}
For $\eta' = \eta_0' + \eta_1' \in \Gamma_h^0 \oplus \Gamma_h^1$ 
\begin{align*}
(\tau, \eta') &= (\tau_0 + \skw \curl \xi, \eta') \\
&= ( \tau_0 + \chi \div S \xi , \eta') \\
&= ( \tau_0, \eta_0') +  ( \tau_0 + \chi \div S \xi , \eta_1') \qquad (\because \chi \div S \xi \perp \eta_0') \\
&= (\tau_0, \eta_0') + (Q_h^1 (\tau_0 + \chi \div S \xi ), \eta_1') \\
&= (\eta_0, \eta_0') + (\eta_1, \eta_1') \qquad (\because Q_h^1 \chi \div S \xi = \eta_1 - Q_h^1 \tau_0)\\
&= (\eta, \eta').
\end{align*}
Note that $\| \eta_1 \|_0 \leq \| \eta \|_0$. By the triangle inequality, $\| \tau_0 \|_{\div} \leq c(\| v \|_0 + \| \eta \|_0)$ and $\| \curl \xi \|_0 \leq c \| \eta_1 - Q_h^1 \tau_0 \|_0 \leq c (\| \eta \|_0 + \| \tau_0 \|_0)$, 
\begin{align*}
\| \tau \|_{\div} \leq \| \curl \xi \|_0 + \| \tau_0 \|_{\div} \leq c (\| v \|_0 + \| \eta \|_0).
\end{align*}
Thus {\bf (A2)} holds and $(\Sigma_h, U_h, \Gamma_h)$ is elasticity stable.
\end{proof}
Now we show an improved a priori error estimate. 
\begin{theorem} \label{thm:error-estm} Suppose that $(\Sigma_h, U_h, \Gamma_h)$ is elasticity stable and there exists an interpolation operator $\Pi_h : H^1(\Omega, \R^{n \times n}) \ra \Sigma_h$ such that 
\begin{align*}
\div \Pi_h \tau = P_h \div \tau, \quad \tau \in H^1(\Omega; \R^{n \times n}),
\end{align*}
where $P_h$ is the $L^2$ projection into $U_h$. Let $(\sigma, u, \gamma)$ and $(\sigma_h, u_h, \gamma_h)$ be the solutions of \eqref{eq:weak-eq1}--\eqref{eq:weak-eq3} and \eqref{eq:weak-disc-eq1}--\eqref{eq:weak-disc-eq3}. Then the following inequality
\begin{align} \label{eq:imp-err}
\| \sigma - \sigma_h \|_0 + \| P_h u - u_h \|_0 + \| \gamma - \gamma_h \|_0 &\leq c(\| \sigma - \Pi_h \sigma \|_0 + \| \gamma - Q_h \gamma \|_0),
\end{align}
holds with $Q_h$ the $L^2$ projection into $\Gamma_h$.
\end{theorem}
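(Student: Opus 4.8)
The plan is to follow the standard mixed-method error analysis: derive Galerkin orthogonality, split each error into an interpolation part and a discrete part, and estimate the discrete part using the coercivity of $A$ together with the stability condition {\bf (A2)}. The feature that makes the estimate \emph{improved} --- the absence of the displacement approximation error $\| u - P_h u \|_0$ on the right-hand side --- comes precisely from the commuting relation $\div \Pi_h = P_h \div$.

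First I would subtract \eqref{eq:weak-disc-eq1}--\eqref{eq:weak-disc-eq3} from \eqref{eq:weak-eq1}--\eqref{eq:weak-eq3}, with test functions restricted to $\Sigma_h \times U_h \times \Gamma_h$, to obtain the error equations, and introduce the discrete errors $e_\sigma = \Pi_h \sigma - \sigma_h$, $e_u = P_h u - u_h$, $e_\gamma = Q_h \gamma - \gamma_h$ together with the interpolation errors $\sigma - \Pi_h \sigma$ and $\gamma - Q_h \gamma$. The crucial first step is to show $\div e_\sigma = 0$: the error version of \eqref{eq:weak-disc-eq2} gives $(\div(\sigma - \sigma_h), v) = 0$ for $v \in U_h$, and since $\div \Pi_h \sigma = P_h \div \sigma$ with $P_h$ the $L^2$ projection, this reduces to $(\div e_\sigma, v) = 0$ for all $v \in U_h$; because $\div e_\sigma \in \div \Sigma_h = U_h$ by {\bf (A1)}, we conclude $\div e_\sigma = 0$.

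Next I would derive two estimates. Testing the error version of \eqref{eq:weak-disc-eq1} with $\tau = e_\sigma$ annihilates the displacement term since $\div e_\sigma = 0$; rewriting $(e_\sigma, e_\gamma)$ through the error version of \eqref{eq:weak-disc-eq3} (valid as $e_\gamma \in \Gamma_h$) and invoking the coercivity of $A$ yields a bound of the form $\| e_\sigma \|_0^2 \leq c(\| \sigma - \Pi_h \sigma \|_0 + \| \gamma - Q_h \gamma \|_0)\| e_\sigma \|_0 + c\| \sigma - \Pi_h \sigma \|_0 \| e_\gamma \|_0$. To control $\| e_u \|_0 + \| e_\gamma \|_0$ I would apply {\bf (A2)} to the pair $(e_u, e_\gamma) \in U_h \times \Gamma_h$, producing $\tau^* \in \Sigma_h$ with $\div \tau^* = e_u$, $(\tau^*, \rho') = (e_\gamma, \rho')$ for all $\rho' \in \Gamma_h$, and $\| \tau^* \|_{\div} \leq c(\| e_u \|_0 + \| e_\gamma \|_0)$. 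Testing \eqref{eq:weak-disc-eq1} with $\tau^*$, using $(u - u_h, \div \tau^*) = \| e_u \|_0^2$ and $(\tau^*, e_\gamma) = \| e_\gamma \|_0^2$, gives $\| e_u \|_0 + \| e_\gamma \|_0 \leq c(\| \sigma - \Pi_h \sigma \|_0 + \| e_\sigma \|_0 + \| \gamma - Q_h \gamma \|_0)$.

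Finally I would substitute the bound for $\| e_\gamma \|_0$ into the first estimate and use Young's inequality to absorb all $\| e_\sigma \|_0$ terms appearing on the right, obtaining $\| e_\sigma \|_0 \leq c(\| \sigma - \Pi_h \sigma \|_0 + \| \gamma - Q_h \gamma \|_0)$; feeding this back into the second estimate yields the same bound for $\| e_u \|_0 + \| e_\gamma \|_0$, and the triangle inequality then delivers \eqref{eq:imp-err}. I expect the main obstacle to be conceptual rather than computational: recognizing that the commuting property forces $\div e_\sigma = 0$, which simultaneously decouples the displacement from the stress estimate and is exactly what eliminates $\| u - P_h u \|_0$ from the right-hand side. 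The only remaining care is bookkeeping in the Young's-inequality absorption, making sure the coupling between the $e_\sigma$ and $e_\gamma$ estimates does not produce a circular bound.
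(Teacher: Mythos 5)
Your argument is correct, and while it uses the same raw ingredients as the paper---the error equations, the splitting into interpolation and discrete errors, and the decisive observation that $\div \Pi_h = P_h \div$ together with \textbf{(A1)} forces $\div(\Pi_h\sigma - \sigma_h) = 0$, which is what eliminates $\| u - P_h u\|_0$ from the right-hand side---it assembles them in a genuinely different way. The paper introduces the auxiliary reduced saddle-point problem on $\Sigma_{h,0} \times \Gamma_h$, where $\Sigma_{h,0} = \{\tau \in \Sigma_h : \div \tau = 0\}$, verifies its Babu\v{s}ka--Brezzi stability in the $L^2$ norms (coercivity of $A$ plus the $v = 0$ case of \textbf{(A2)}), and then obtains the bound on $\| \Pi_h \sigma - \sigma_h \|_0 + \| Q_h \gamma - \gamma_h \|_0$ in a single application of that abstract stability, with no absorption step; the displacement error is handled afterwards by a separate application of \textbf{(A2)}. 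You instead couple the unknowns the other way around: a direct coercivity estimate (testing with $e_\sigma$, which lies in $\Sigma_h$ and is divergence-free) bounds $\| e_\sigma \|_0$ in terms of $\| e_\gamma \|_0$, while \textbf{(A2)} applied to the pair $(e_u, e_\gamma)$ bounds $\| e_u \|_0 + \| e_\gamma \|_0$ in terms of $\| e_\sigma \|_0$, and a Young's-inequality absorption decouples the system. The absorption does close---this is exactly the point you flag---because in your first estimate $\| e_\gamma \|_0$ enters multiplied by the interpolation error $\| \sigma - \Pi_h \sigma \|_0$ rather than by an $O(1)$ constant, so no circular bound can arise; your use of the third error equation to convert $(e_\sigma, e_\gamma)$ into $(\Pi_h \sigma - \sigma, e_\gamma)$ is the counterpart of the paper's adding \eqref{eq:err-eq3} to \eqref{eq:err-eq1} before invoking stability. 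What each route buys: the paper factors the work through a stability statement for the reduced problem, which is cleaner, avoids bookkeeping, and is reusable as an abstract fact; your version needs no auxiliary problem at all and is entirely elementary, at the modest price of the final absorption argument and of estimating $\gamma$ jointly with $u$ rather than jointly with $\sigma$.
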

\begin{proof}
The proof is same as that of Theorem 4.1 in \cite{JL1} but we include the details here to be self-contained. The difference of \eqref{eq:weak-eq1}--\eqref{eq:weak-eq3} and \eqref{eq:weak-disc-eq1}--\eqref{eq:weak-disc-eq3} gives  
\begin{align}
\label{eq:err-eq1} (A(\sigma - \sigma_h), \tau) + (u - u_h, \div \tau) + (\gamma - \gamma_h, \tau) &= 0, & & \tau \in \Sigma_h, \\
\label{eq:err-eq2} (\div (\sigma - \sigma_h), v) &= 0, & & v \in U_h, \\
\label{eq:err-eq3} (\sigma - \sigma_h, \eta) &= 0, & & \eta \in \Gamma_h.
\end{align}
Let $\Sigma_{h,0} = \{ \tau \in \Sigma_h : \div \tau = 0 \}$ and consider an auxiliary problem of seeking $(\sigma_h', \gamma_h') \in \Sigma_{h,0} \times \Gamma_h$ such that
\begin{align} \label{eq:reduced-system}
(A \sigma_h', \tau) + (\gamma_h', \tau) + (\sigma_h', \eta) = F(\tau) + G(\eta), \qquad (\tau, \eta) \in \Sigma_{h,0} \times \Gamma_h,
\end{align}
with a bounded linear functional $(F, G)$ on $\Sigma_{h,0} \times \Gamma_h$. As a special case of {\bf (A2)}, for $v=0$ and any given $\eta \in \Gamma_h$ there exists $\tau \in \Sigma_{h,0}$ such that $(\tau, \eta') = (\eta, \eta')$ for all $\eta' \in \Gamma_h$ and $\| \tau \|_0 \leq c \|\eta \|_0$.   From this observation and {\bf (A1)}, $\Sigma_{h,0} \times \Gamma_h$ is a stable mixed finite element for the problem \eqref{eq:reduced-system} with the $L^2$ norms. By restricting $\tau \in \Sigma_{h,0}$, the sum of \eqref{eq:err-eq1} and \eqref{eq:err-eq3} is
\begin{align*}
(A(\sigma - \sigma_h), \tau) + (\gamma - \gamma_h, \tau) + (\sigma - \sigma_h, \eta) = 0,
\end{align*}
which is equivalent to 
\begin{multline} \label{eq:inter-eq1}
(A(\sigma_h - \Pi_h \sigma), \tau) + (\gamma_h - Q_h \gamma, \tau) + (\sigma_h - \Pi_h \sigma, \eta) \\
= (A(\sigma - \Pi_h \sigma), \tau) + (\gamma - Q_h \gamma, \tau) + (\sigma - \Pi_h \sigma, \eta).
\end{multline}
Note that $\sigma_h - \Pi_h \sigma \in \Sigma_{h,0}$ because $\div \sigma_h = P_h \div \sigma = \div \Pi_h \sigma$ by \eqref{eq:err-eq2} and $\div \Sigma_h = U_h$. By the Babu\v{s}ka--Brezzi stability of \eqref{eq:reduced-system}, there exists $(\tau, \eta) \in \Sigma_{h,0} \times \Gamma_h$ such that $\| \tau \|_0 + \| \eta \|_0 \leq c$ and 
\begin{multline*}
\| \sigma_h - \Pi_h \sigma \|_0 + \| \gamma_h - Q_h \gamma \|_0 \leq (A(\sigma_h - \Pi_h \sigma), \tau) + (\gamma_h - Q_h \gamma, \tau) + (\sigma_h - \Pi_h \sigma, \eta).
\end{multline*}
Combining this, \eqref{eq:inter-eq1}, and the Cauchy--Schwarz inequality with $\| \tau \|_0 + \| \eta \|_0 \leq c$, 
\begin{align*}
\| \sigma_h - \Pi_h \sigma \|_0 + \| \gamma_h - Q_h \gamma \|_0 &\leq (A(\sigma - \Pi_h \sigma), \tau) + (\gamma - Q_h \gamma, \tau) + (\sigma - \Pi_h \sigma, \eta) \\
&\leq c (\| \sigma - \Pi_h \sigma \|_0 + \| \gamma - Q_h \gamma \|_0). 
\end{align*}
By the triangle inequality and the above one, 
\begin{align*}
\| \sigma - \sigma_h \|_0 + \| \gamma - \gamma_h \|_0 &\leq \| \sigma - \Pi_h \sigma \|_0 + \| \Pi_h \sigma - \sigma_h \|_0 + \| \gamma - Q_h \gamma \|_0 + \| Q_h \gamma - \gamma_h \|_0 \\
&\leq c (\| \sigma - \Pi_h \sigma \|_0 + \| \gamma - Q_h \gamma \|_0),
\end{align*}
so \eqref{eq:imp-err} for $\| \sigma - \sigma_h \|_0$ and $\| \gamma - \gamma_h \|_0$ is proved. To estimate $\| u_h - P_h u \|_0$, observe that \eqref{eq:err-eq1} gives
\begin{align} \label{eq:new-err-eq1}
(A(\sigma - \sigma_h), \tau) + (P_h u - u_h , \div \tau) + (\gamma - \gamma_h, \tau) = 0, \qquad \tau \in \Sigma_h,
\end{align}
because $\div \tau \in U_h$ is orthogonal to $u - P_h u$. By {\bf (A2)} there is $\tau$ in \eqref{eq:new-err-eq1} such that $\div \tau = P_h u - u_h$ and $\| \tau \|_{\div} \leq c \| P_h u - u_h \|_0$. Then we have
\begin{align*}
\| P_h u - u_h \|_0^2 &= -(A(\sigma - \sigma_h), \tau) - (\gamma - \gamma_h, \tau) \\
& \leq c (\| \sigma - \sigma_h \|_0 + \| \gamma - \gamma_h \|_0) \| P_h u - u_h \|_0.
\end{align*}
Combining the result with the estimates of $\| \sigma - \sigma_h \|_0$ and $\| \gamma - \gamma_h \|_0$, we have
\begin{align*}
\| P_h u - u_h \|_0 \leq c (\| \sigma - \Pi_h \sigma \|_0 + \| \gamma - Q_h \gamma \|_0) , \end{align*}
as desired. 
\end{proof}
If $\Sigma_h$ and $\Gamma_h$ provide higher order approximations than that of $U_h$, then \eqref{eq:imp-err} implies that $\| P_h u - u_h \|_0$ is superconvergent. A local post-processing can be used to get a new numerical solution $u_h^*$ such that the convergence rate of $\| u - u_h^* \|_0$ is as good as that of $\| \sigma - \sigma_h \|_0 + \| \gamma - \gamma_h \|_0$. A higher order superconvergence of $\| P_h u - u_h \|_0$ can be obtained by the Aubin--Nitsche duality argument and the elliptic regularity of $\Omega$ when $f \in U_h$. In this case, a higher order local post-processing can be used to obtain $u_h^{**}$ which is a higher order approximation of $u$ in $L^2(\R^n)$. A careful discussion can be found in \cite{JL1} for second order rectangular elements. It is straightforward to generalize the argument in \cite{JL1} to higher order elements.

\section{Examples}
In this section we show examples which can be analyzed by the abstract framework. 

By $\mathcal{T}_h$ we denote a shape-regular mesh of $\Omega$ and $h$ is the maximum diameter of the elements in $\mathcal{T}_h$. By $\mathcal{P}_k(D)$ and $\mathcal{P}_k(D;\mathbb{X})$, we denote the spaces of $\R$ and $\mathbb{X}$--valued polynomials of degree $\leq k$ on $D \subset \Omega$. For a rectangle $D$, $\mathcal{Q}_k(D)$ is the space of polynomials of degree at most $k$ in each variable $x_i$, $1 \leq i \leq n$. Now we define 
\begin{align*}
\mathcal{P}_k^d(\X) &= \{ p \in L^2(\X) \,|\, p|_T \in \mathcal{P}_k(T; \X), \quad T \in \mathcal{T}_h \}, & &  k \geq 0,\\
\mathcal{P}_k^c(\X) &= \{ p \in H^1(\X) \,|\, p|_T \in \mathcal{P}_k(T; \X), \quad T \in \mathcal{T}_h \}, & & k \geq 1,\\
\mathcal{Q}_k^d(\X) &= \{ p \in L^2(\X) \,|\, p|_T \in \mathcal{Q}_k(T; \X), \quad T \in \mathcal{T}_h \}, & &  k \geq 0,\\
\mathcal{Q}_k^c(\X) &= \{ p \in H^1(\X) \,|\, p|_T \in \mathcal{Q}_k(T; \X), \quad T \in \mathcal{T}_h \}, & & k \geq 1,\\
RTN_k &= \{ p \in H(\div) \,|\, p|_T \in \mathcal{P}_{k-1}(T; \R^n) + \boldsymbol{x} \mathcal{P}_{k-1}(T) \}, & & k \geq 1,\\
BDM_k &= \{ p \in H(\div) \,|\, p|_T \in \mathcal{P}_k(T; \R^n) \}, & & k \geq 1,
\end{align*}
where $\boldsymbol{x}$ is the vector function $(x_1, ..., x_n)$ \cite{BDM85,Nedelec80,Nedelec86,RT75}. Note that the lowest order RTN element is denoted by $RTN_1$ in this paper, which is different from \cite{BFBook}. The rectangular RTN and BDM elements \cite{BFBook} are denoted by $rRTN_k$ and $rBDM_k$ with $k\geq 1$.  We also define $RTN_k(\R^n)$ and $BDM_k(\R^n)$ as the subspaces of $H(\div;\R^n)$ such that each row of an element in those spaces is in $RTN_k$ and $BDM_k$, respectively. Throughout this section $\Sigma_h$, $U_h$, $\Gamma_h$, $\Xi_h$ are the finite element spaces in Figure \ref{diag1}.

\subsection{Elements with continuous $\Gamma_h$} 

\subsubsection{PEERS} \label{eg1}
Let $b_T$ be the standard cubic bubble function on a triangle $T \in \mathcal{T}_h$ and
\begin{align*}
B = \{ \xi \,|\, \xi|_T = p b_T , p \in \R^2, T \in \mathcal{T}_h \}.
\end{align*} 
The PEERS \cite{ABD84} is 
\begin{align*}
\Sigma_h = RTN_1(\R^2) + \curl B, \qquad U_h = \mathcal{P}_0^d(\R^2), \qquad \Gamma_h = \mathcal{P}_1^c(\R_{\skw}^{2 \times 2}).
\end{align*}
Let $\Gamma_h^0 = 0$ and $\Xi_h = \mathcal{P}_1^c(\R^2) + B$. It is not difficult to see that $(\Sigma_h, U_h, 0)$ is elasticity stable from the stability of $(RTN_1, \mathcal{P}_0^d)$ for the mixed Poisson equation. Moreover, the stability of the MINI element for the Stokes equation \cite{ABF84} implies that $(\Xi_h, \Gamma_h)$ is Stokes stable because $S$ and $\chi$ are algebraic isomorphisms. Thus $(\Sigma_h, U_h, \Gamma_h)$ is elasticity stable. 

\subsubsection{Taylor--Hood based elements} \label{eg2} Let
\begin{align*}
\Sigma_h = BDM_{k}(\R^n), \quad U_h = \mathcal{P}_{k-1}^d (\R^n), \quad \Gamma_h = \mathcal{P}_{k}^c(\R_{\skw}^{n \times n}),
\end{align*}
for $k \geq 1$ and take $\Gamma_h^0 = 0$, 
\begin{align*}
\Xi_h = 
\begin{cases}
\mathcal{P}_{k+1}^c (\R^2), &\text{ if } n = 2, \\
\mathcal{P}_{k+1}^c (\R^{3 \times 3}), &\text{ if } n = 3.
\end{cases}
\end{align*}
By definition, $(\Sigma_h, U_h, 0)$ is elasticity stable. Moreover, the stability of Taylor--Hood elements for the Stokes equation \cite{MR1269482,MR1442933,MR1098408,MR0339677} yields that $(\Xi_h, \Gamma_h)$ is Stokes stable. Thus $(\Sigma_h, U_h, \Gamma_h)$ is elasticity stable. 

In the two dimensional case these elements were noticed in \cite{FalkWS}. In the three dimensional case it seems that the same elements have not appeared in the literature but similar elements were proposed in \cite{MR2449101} with slightly larger space for $\Sigma_h$ using the Raviart--Thomas--N\'{e}d\'{e}lec spaces. In addition, the improved error estimate \eqref{eq:imp-err} was not claimed in \cite{MR2449101}.

\subsubsection{Two dimensional rectangular element} \label{eg3} Let $\Omega$ be a bounded two dimensional domain with a rectangular mesh $\mathcal{T}_h$. There is a rectangular PEERS element which has same convergence rates as PEERS \cite{morley89}. Here we propose a rectangular version of the Taylor--Hood based elements in two dimensions. Let $S_2$ be the serendipity element with 8 local degrees of freedom \cite{Brenner-Scott-book} and set 
\begin{align*}
\Sigma_h = rBDM_1(\R^2), \quad U_h = \mathcal{P}_0^d(\R^2), \quad \Gamma_h = Q_1^c(\R_{\skw}^{2 \times 2}),
\end{align*}
and let $\Xi_h = S_2(\R^2)$, $\Gamma_h^0 = 0$. Since $(rBDM_1, \mathcal{P}_0^d)$ is stable for mixed Poisson equation with $\div rBDM_1 = \mathcal{P}_0^d$, $(\Sigma_h, U_h, 0)$ is elasticity stable. It is also known that $(\Xi_h, \Gamma_h)$ is Stokes stable \cite{MR725982}, so $(\Sigma_h, U_h, \Gamma_h)$ is elasticity stable. 

\subsection{Elements with discontinuous $\Gamma_h$}
To apply the framework for elements with discontinuous $\Gamma_h$ we need preliminary results.  
\begin{lemma} \label{lemma:afw1}
Suppose that $\Sigma_h = BDM_1(\R^n)$, $U_h = 0$, and $\Gamma_h = \mathcal{P}_0^d(\R_{\skw}^{n \times n})$. Then $(\Sigma_h, U_h, \Gamma_h)$ is elasticity stable.
\end{lemma}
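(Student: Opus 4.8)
The plan is to check conditions \textbf{(A1)} and \textbf{(A2)} directly. Because $U_h = \{0\}$, condition \textbf{(A1)} holds trivially (there is no displacement to reproduce, and $\{0\}\subseteq\div\Sigma_h$ is vacuous), and \textbf{(A2)} forces the test function $v$ to vanish. Hence the entire content of the lemma is the following weak-symmetry control by divergence-free tensors: for every $\rho \in \mathcal{P}_0^d(\R_{\skw}^{n \times n})$ one must produce a tensor $\tau \in BDM_1(\R^n)$ with $\div\tau = 0$ whose element-wise skew-symmetric average equals $\rho$, that is $(\tau,\rho') = (\rho,\rho')$ for all $\rho' \in \mathcal{P}_0^d(\R_{\skw}^{n \times n})$, together with the bound $\|\tau\|_{\div} = \|\tau\|_0 \leq c\|\rho\|_0$ (the two norms coincide since $\div\tau = 0$).

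First I would realize $\tau$ as a $\curl$, exactly as in the construction in the proof of the combination theorem above specialized to $\tau_0 = 0$ and $\Gamma_h^0 = 0$. I would choose a companion space $\Xi_h\subset\Xi$ whose row-wise $\curl$ lands in $BDM_1(\R^n)$: in two dimensions $\Xi_h = \mathcal{P}_2^c(\R^2)$, and in three dimensions $\Xi_h$ is the matrix-valued second-kind N\'ed\'elec space of degree two, each row of whose $\curl$ is a divergence-free, normal-continuous, piecewise-linear field and hence an element of $BDM_1$. For $\xi \in \Xi_h$ set $\tau = \curl\xi$; then $\div\tau = 0$ automatically, and the commuting identity $\skw\curl\xi = \chi\div S\xi$ yields $Q_h\skw\tau = Q_h\chi\div S\xi$, where $Q_h$ is the $L^2$ projection onto $\mathcal{P}_0^d(\R_{\skw}^{n \times n})$. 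Thus matching $\rho$ reduces to finding $\xi \in \Xi_h$ with $Q_h\chi\div S\xi = \rho$ and $\|\curl\xi\|_0 \leq c\|\rho\|_0$, which is precisely condition \textbf{(B)}, i.e. the statement that $(\Xi_h,\mathcal{P}_0^d(\R_{\skw}^{n \times n}))$ is Stokes stable. Setting $\tau=\curl\xi$ then delivers all requirements of \textbf{(A2)}.

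The main obstacle is therefore establishing this Stokes stability. Unwinding the algebraic isomorphisms $S$ and $\chi$, in two dimensions it is exactly the inf--sup stability of the $\mathcal{P}_2$--$\mathcal{P}_0$ velocity--pressure pair, and in three dimensions its N\'ed\'elec analogue, for a \emph{discontinuous} pressure space. This is the delicate point: since $\Gamma_h = \mathcal{P}_0^d(\R_{\skw}^{n \times n})$ is element-wise, the skew-symmetric average must be controlled on each element separately, which is stronger than in the continuous-$\Gamma_h$ examples. What makes it work is that the edge (midpoint) degrees of freedom of $\mathcal{P}_2$ govern the normal fluxes across interelement boundaries independently, so that any element-wise constant divergence can be matched; concretely I would first rule out spurious pressure modes and then obtain the uniform bound either through a Fortin operator or by invoking the classical stability of the $\mathcal{P}_2$--$\mathcal{P}_0$ pair and its three-dimensional counterpart. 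Once condition \textbf{(B)} is secured, the $\curl$-construction of the previous paragraph closes the proof.
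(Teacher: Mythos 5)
The paper itself contains no proof of this lemma; it defers entirely to the literature (``for a simple proof we refer to Proposition 2.10 in \cite{Guzman11}'', also citing \cite{AFW07,MR2449101}), so your attempt has to stand on its own. Its skeleton is sound and consistent with the paper's framework: since $U_h=0$, the real content is {\bf (A2)} with $v=0$ (your reading of {\bf (A1)} as vacuous is the only sensible one --- read literally it would say $\div \Sigma_h =0$, which is false for $BDM_1(\R^n)$, a looseness already present in the paper's statement), and realizing the divergence-free corrector as $\tau=\curl\xi$ with $\curl\Xi_h\subset BDM_1(\R^n)$ reduces everything to condition {\bf (B)}; this is exactly the combination theorem of Section 3 specialized to $\Gamma_h^0=0$. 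In two dimensions your argument is complete: unwinding $S$ and $\chi$ and using $\|\curl\xi\|_0=\|\grad\xi\|_0$, condition {\bf (B)} for $(\mathcal{P}_2^c(\R^2),\mathcal{P}_0^d(\R^{2\times 2}_{\skw}))$ is precisely the classical $\mathcal{P}_2$--$\mathcal{P}_0$ inf-sup condition (with no boundary condition on the velocity and no mean-zero constraint on the pressure, which only helps), and citing that result is legitimate.

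The three-dimensional half, however, has a genuine gap, and it sits exactly where the lemma is hard. You reduce to condition {\bf (B)} for matrix-valued degree-two N\'ed\'elec fields paired with $\mathcal{P}_0^d(\R^{3\times3}_{\skw})$ and then propose to settle it ``by invoking the classical stability of the $\mathcal{P}_2$--$\mathcal{P}_0$ pair and its three-dimensional counterpart.'' No such classical three-dimensional result exists, and the mechanism you give for it --- that the edge-midpoint degrees of freedom ``govern the normal fluxes across interelement boundaries independently'' --- is a strictly two-dimensional fact. In 3D the flux $\int_F \xi\cdot n$ through a face is determined (midpoint quadrature, exact for quadratics) by the three edge-midpoint values on that face, and each edge midpoint is shared by \emph{all} faces meeting that edge, so the face fluxes are coupled; $\mathcal{P}_2$ has no face-interior degrees of freedom in 3D, which is precisely why low-order 3D constructions with piecewise-constant multipliers resort to face-bubble or matrix-bubble enrichment (compare the spaces $\hat B_k$ and ${\bf b}_T$ of Lemma \ref{lemma:bub-stable} in this paper, or Bernardi--Raugel in the Stokes setting). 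The obstruction persists whether you take $H^1$-conforming $\mathcal{P}_2$ rows or the larger tangentially continuous N\'ed\'elec rows; in the latter case you additionally control only $\|\curl\xi\|_0$, so even an $H^1$-based Stokes theorem could not be invoked verbatim. So the 3D case of your proof rests on an unproven, and as stated unavailable, inf-sup condition; this is the very point for which the cited proofs (the explicit construction in Proposition 2.10 of \cite{Guzman11}, the commuting-diagram argument of \cite{AFW07}, the bubble-based Stokes argument of \cite{MR2449101}) need tools beyond classical Stokes stability.
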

A proof of this lemma can be found in \cite{AFW07,MR2449101,Guzman11}. For a simple proof we refer to Proposition 2.10 in \cite{Guzman11}. From this lemma the following can be easily obtained.
\begin{corollary} \label{cor:stable} The two triples
\begin{align*}
(BDM_k(\R^n), \mathcal{P}_{k-1}^d(\R^n), \mathcal{P}_0^d(\R_{\skw}^{n \times n})) \quad \text{and} \quad (RTN_{k+1}(\R^n), \mathcal{P}_k^d(\R^n), \mathcal{P}_0^d(\R_{\skw}^{n \times n}))
\end{align*}
are elasticity stable for $k \geq 1$.
\end{corollary}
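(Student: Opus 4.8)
The plan is to verify the two defining conditions \textbf{(A1)} and \textbf{(A2)} of elasticity stability directly, using Lemma \ref{lemma:afw1} to handle the skew-symmetric moments and the classical mixed Poisson stability of the pairs to handle the divergence. The key structural observation is that both triples share the skew-symmetric space $R_h = \mathcal{P}_0^d(\R_{\skw}^{n \times n})$ appearing in the lemma, and that $BDM_1(\R^n) \subseteq BDM_k(\R^n)$ and $BDM_1(\R^n) \subseteq RTN_{k+1}(\R^n)$ for $k \geq 1$, since $\mathcal{P}_1(T; \R^n) \subseteq \mathcal{P}_k(T; \R^n)$ rowwise. Consequently the divergence-free corrector supplied by the lemma automatically lives in the larger stress space in both cases.

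First I would check \textbf{(A1)}. Applying the classical rowwise identities $\div BDM_k = \mathcal{P}_{k-1}^d$ and $\div RTN_{k+1} = \mathcal{P}_k^d$ gives $\div BDM_k(\R^n) = \mathcal{P}_{k-1}^d(\R^n)$ and $\div RTN_{k+1}(\R^n) = \mathcal{P}_k^d(\R^n)$, which are exactly the prescribed $U_h$ in the two cases; this step is routine. The substance is \textbf{(A2)}, which I would establish by a two-stage construction. Fix $(v, \rho) \in U_h \times R_h$. By the standard Babu\v{s}ka--Brezzi stability of the mixed Poisson pairs $(BDM_k, \mathcal{P}_{k-1}^d)$ and $(RTN_{k+1}, \mathcal{P}_k^d)$, there is $\tau_1 \in \Sigma_h$ with $\div \tau_1 = v$ and $\| \tau_1 \|_{\div} \leq c \| v \|_0$. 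Writing $Q_h$ for the $L^2$ projection onto $R_h$, set $\tilde \rho = \rho - Q_h \tau_1 \in R_h$ and apply condition \textbf{(A2)} of Lemma \ref{lemma:afw1} with datum $(0, \tilde \rho)$; since there $U_h = 0$, this produces $\tau_2 \in BDM_1(\R^n) \subseteq \Sigma_h$ with $\div \tau_2 = 0$, with $(\tau_2, \rho') = (\tilde \rho, \rho')$ for all $\rho' \in R_h$, and with $\| \tau_2 \|_0 \leq c \| \tilde \rho \|_0$.

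I would then take $\tau = \tau_1 + \tau_2$ and verify the three requirements of \textbf{(A2)}. The divergence-free nature of $\tau_2$ preserves $\div \tau = \div \tau_1 = v$. For the moments, since $\rho' \in R_h$ one has $(\tau_1, \rho') = (Q_h \tau_1, \rho')$, so $(\tau, \rho') = (Q_h \tau_1, \rho') + (\tilde \rho, \rho') = (\rho, \rho')$. For the bound, the contraction property of $Q_h$ gives $\| \tilde \rho \|_0 \leq \| \rho \|_0 + \| \tau_1 \|_0 \leq \| \rho \|_0 + c \| v \|_0$, and then the triangle inequality yields $\| \tau \|_{\div} \leq \| \tau_1 \|_{\div} + \| \tau_2 \|_0 \leq c (\| v \|_0 + \| \rho \|_0)$.

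The only delicate point I expect is the compatibility of the two stages: the corrector against $R_h$ must not disturb the divergence already fixed by $\tau_1$. This is precisely why I invoke the lemma with zero divergence datum, forcing $\tau_2$ to be divergence-free, while the containment $BDM_1(\R^n) \subseteq \Sigma_h$ guarantees that $\tau_2$ remains admissible in the larger stress space. With these two ingredients the stages combine without conflict, so both triples are elasticity stable for $k \geq 1$.
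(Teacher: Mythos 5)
Your proposal is correct and is exactly the argument the paper intends: the paper gives no explicit proof (it says the corollary is "easily obtained" from Lemma \ref{lemma:afw1}), and the standard way to fill this in is precisely your two-stage construction — a divergence-matching $\tau_1$ from mixed Poisson stability of $(BDM_k,\mathcal{P}_{k-1}^d)$ or $(RTN_{k+1},\mathcal{P}_k^d)$, corrected by the divergence-free $\tau_2 \in BDM_1(\R^n) \subseteq \Sigma_h$ supplied by the lemma to fix the skew-symmetric moments. Your verification of \textbf{(A1)}, the moment identity via the $L^2$ projection $Q_h$, and the norm bound are all as expected, so nothing further is needed.
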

Let $b_T$ be the standard cubic/quartic bubble function on a triangle/tetrahedron $T \in \mathcal{T}_h$. In the two and three dimensional cases $B_k \subset \Xi$, $k \geq 1$ is defined by 
\begin{align*}
B_{k} = 
\begin{cases}
\{ \eta \in L^2(\Omega; \R^2) \,|\, \eta|_T \in b_T \mathcal{P}_{k-1}(T; \R^2) \}, &  \text{if } n = 2, \\
\{ \eta \in L^2(\Omega; \R^{3 \times 3}) \,|\, \eta|_T \in b_T \mathcal{P}_{k-1}(T; \R^{3 \times 3}) \}, &  \text{if } n = 3.
\end{cases}
\end{align*}
When $n=3$ a matrix bubble function ${\bf b}_T$ on each $T \in \mathcal{T}_h$ is defined by 
\begin{align*}
{\bf b}_T = \sum_{i=0}^4 \lambda_i \lambda_{i+1} \lambda_{i+2} (\grad \lambda_{i+3})^T (\grad \lambda_{i+3}),
\end{align*}
where $\lambda_i$, $i=0,1,2,3$ are the barycentric coordinates on $T$, $\grad \lambda_i$ is a row vector, and the index $i$ is counted modulo 4. One can see that ${\bf b}_T$ is symmetric positive definite and the cross product of each row of ${\bf b}_T$ and the unit normal vector $n_e$ on an edge/face $e \subset \pd T$ vanishes. By the integration by parts, 
\begin{align} \label{eq:IBP}
(\curl ({\bf b}_T \curl \eta_1), \eta_2) = ({\bf b}_T \curl \eta_1, \curl \eta_2), \qquad \eta_1, \eta_2 \in \mathcal{P}_k(T;\R_{\skw}^{n \times n}),
\end{align}
so the above relation gives an inner product on
\begin{align*}
\hat{\mathcal{P}}_k^d(\R_{\skw}^{n \times n}) = \{ \tau \in \mathcal{P}_k^d(\R_{\skw}^{n \times n}) \,|\, \tau \perp \mathcal{P}_0^d(\R_{\skw}^{n \times n}) \}.
\end{align*}
Moreover, the norm given by this inner product with weight ${\bf b}_T$ is equivalent to the standard $L^2$ norm on $T$ up to constants independent of the diameter of $T$. For $k \geq 1$ let
\begin{align} \label{eq:B-eta}
B(\eta) = 
\begin{cases}
h_T^{-2} b_T \rot \eta & \text{for } \eta \in \hat{\mathcal{P}}_k(T; \R_{\skw}^{2 \times 2})  \quad \text{ if } n = 2, \\
h_T^{-2} {\bf b}_T \curl \eta & \text{for } \eta \in \hat{\mathcal{P}}_k(T; \R_{\skw}^{3 \times 3}) \quad \text{ if }  n = 3,
\end{cases}
\end{align}
and define $\hat{B}_k$ as 
\begin{align*}
\hat{B}_k = \{\xi \in \Xi \,:\, \xi|_T = B(\eta) \quad \text{for some } \eta \in \hat{\mathcal{P}}_k(T; \R_{\skw}^{n \times n}) \}. 
\end{align*}
\begin{lemma} \label{lemma:bub-stable}
For $k \geq 1$ the pairs $(\hat{B}_k, \hat{\mathcal{P}}_k^d(\R_{\skw}^{n \times n}))$ and $(B_k, \hat{\mathcal{P}}_{k}^d(\R_{\skw}^{n \times n}))$ are Stokes stable.
\end{lemma}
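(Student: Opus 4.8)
The plan is to verify condition \textbf{(B)} for each pair by constructing, for a given $\rho$ in the space $\hat{\mathcal P}_k^d(\R_{\skw}^{n\times n})$, an explicit $\xi$ in the respective bubble space. The starting point common to both pairs is a reduction of the bilinear form in \textbf{(B)}: since $\chi\div S=\skw\curl$, since $Q_h$ is the $L^2$ projection onto $\hat{\mathcal P}_k^d(\R_{\skw}^{n\times n})$ and $\rho$ already lies in this space, and since $\rho$ is skew-symmetric, one has
\begin{align*}
(Q_h\chi\div S\,\xi,\rho)=(\skw\curl\xi,\rho)=(\curl\xi,\rho).
\end{align*}
Thus in both cases it suffices to produce $\xi$ with $(\curl\xi,\rho)\geq c\|\rho\|_0^2$ and $\|\curl\xi\|_0\leq c\|\rho\|_0$.

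For the pair $(\hat B_k,\hat{\mathcal P}_k^d(\R_{\skw}^{n\times n}))$ I would take $\xi$ to be the element of $\hat B_k$ determined by $\rho$, that is $\xi|_T=B(\rho|_T)$ for every $T$, which is admissible because $\rho|_T\in\hat{\mathcal P}_k(T;\R_{\skw}^{n\times n})$. When $n=3$ the identity \eqref{eq:IBP}, which is available precisely because the tangential trace of each row of ${\bf b}_T$ vanishes on $\partial T$, turns each element contribution into a multiple of the weighted form $({\bf b}_T\curl\rho,\curl\rho)_T$; when $n=2$ the scalar bubble $b_T$ vanishes on $\partial T$, so a plain integration by parts yields the analogous weighted expression. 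The stated equivalence of the ${\bf b}_T$-weighted inner product with the $L^2$ norm on $\hat{\mathcal P}_k(T;\R_{\skw}^{n\times n})$ then gives the lower bound $(\curl\xi,\rho)\geq c\|\rho\|_0^2$ after summation over $T$, the $h_T^{-2}$ normalization built into $B(\cdot)$ being exactly what keeps the constant mesh-independent. For the companion bound $\|\curl\xi\|_0\leq c\|\rho\|_0$ I would combine an inverse estimate on each $T$ with the same norm equivalence.

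For the pair $(B_k,\hat{\mathcal P}_k^d(\R_{\skw}^{n\times n}))$ the construction is even more direct. Here I would take $\xi|_T$ to be a suitably $h_T$-scaled multiple of $b_T\curl\rho|_T$ (for $n=3$), which lies in $b_T\mathcal P_{k-1}(T;\R^{3\times3})=B_k|_T$ since $\curl\rho|_T\in\mathcal P_{k-1}$. Because the scalar bubble vanishes on $\partial T$, integration by parts produces no boundary term and $(\curl\xi,\rho)_T$ becomes a multiple of the scalar-bubble-weighted form $(b_T\curl\rho,\curl\rho)_T$; the equivalence of the $b_T$-weighted $L^2$ norm with the $L^2$ norm on polynomials, together with the injectivity of $\curl$ on the zero-mean space $\hat{\mathcal P}_k(T;\R_{\skw}^{n\times n})$, yields the lower bound, and an inverse estimate yields the upper bound. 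When $n=2$ one checks that $B(\eta)=h_T^{-2}b_T\rot\eta$ lies in $b_T\mathcal P_{k-1}(T;\R^2)$, so $\hat B_k\subset B_k$ and the stability of $(B_k,\hat{\mathcal P}_k^d(\R_{\skw}^{2\times2}))$ follows at once from that of $(\hat B_k,\hat{\mathcal P}_k^d(\R_{\skw}^{2\times2}))$, since enlarging the trial space only helps \textbf{(B)}.

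The main obstacle I anticipate is uniformity in $h$: both inequalities in \textbf{(B)} must hold with a single mesh-independent constant, which forces one to track the powers of $h_T$ carefully through the weighted forms and the inverse estimates and to verify that they cancel. This is the role of the explicit $h_T^{-2}$ (resp.\ scalar-bubble) normalization of the bubble fields, together with the two structural facts that make the weighted forms genuine, $h_T$-uniformly equivalent norms: the tangential-trace property of ${\bf b}_T$ that removes the boundary term in \eqref{eq:IBP}, and the injectivity of $\curl$ on $\hat{\mathcal P}_k(T;\R_{\skw}^{n\times n})$ that makes $({\bf b}_T\curl\cdot,\curl\cdot)_T$ and $(b_T\curl\cdot,\curl\cdot)_T$ positive definite. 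Everything else reduces to standard scaling and inverse estimates on a shape-regular mesh.
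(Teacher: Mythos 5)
Your proposal is correct and follows essentially the same route as the paper: the same test function $\xi|_T = h_T^{-2}\mathbf{b}_T\curl\rho|_T$ (resp.\ $h_T^{-2}b_T\curl\rho|_T$), the same reduction $(Q_h\chi\div S\,\xi,\rho)=(\curl\xi,\rho)$, the identity \eqref{eq:IBP} (or plain integration by parts for the scalar bubble), and the scaling/norm-equivalence argument for both bounds. The only cosmetic differences are that you make the injectivity of $\curl$ on $\hat{\mathcal{P}}_k(T;\R_{\skw}^{n\times n})$ explicit and dispose of the two-dimensional $(B_k,\hat{\mathcal{P}}_k^d)$ case via the inclusion $\hat{B}_k\subset B_k$, where the paper simply repeats the same computation.
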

\begin{proof}
We prove only the three dimensional case for both pairs because the two dimensional case is similar. 

In the case of $(\hat{B}_k, \hat{\mathcal{P}}_k(\R_{\skw}^{n \times n}))$ pair, for a given $\eta \in \hat{\mathcal{P}}_k(\R_{\skw}^{n \times n})$, take $\xi \in \hat{B}_k$ such that $\xi|_T = h_T^{-2} {\bf b}_T \curl \eta|_T$ for $T \in \mathcal{T}_h$. Then
\begin{align*}
(Q_h \chi \div S \xi, \eta)_T &= (Q_h \skw \curl \xi, \eta)_T = (\curl \xi, \eta)_T = h_T^{-2} ({\bf b}_T \curl \eta, \curl \eta)_T.
\end{align*}
By the standard scaling argument 
\begin{align*}
\| \eta \|_{0,T} \sim h_T \| \curl \eta \|_{0,T} , \qquad 
\| \curl \xi \|_{0,T} \sim h_T^{-1} \| \curl \eta \|_{0,T} \sim \| \eta \|_{0,T},
\end{align*}
so
$\| \curl \xi \|_0 \leq c \| \eta \|_0$ and $(Q_h \chi \div S \xi, \eta) \geq c \| \eta \|_0^2$.

With the $({B}_k, \hat{\mathcal{P}}_k(\R_{\skw}^{n \times n}))$ pair and a given $\eta \in \hat{\mathcal{P}}_k(\R_{\skw}^{n \times n})$, one can take $\xi \in B_k$ such that $\xi|_T = h_T^{-2} b_T \curl \eta|_T$ for $T \in \mathcal{T}_h$. The rest of the argument is then similar to the first case, so we omit the details.
\end{proof}
We are now ready to present examples with discontinuous $\Gamma_h$.

\subsubsection{The Cockburn--Gopalakrishnan--Guzm\'{a}n (CGG) and the Arnold--Falk--Winther (AFW) elements} \label{eg4} The CGG elements \cite{CGG10} are  
\begin{align*}
\Sigma_h = RTN_{k}(\R^n) + \curl \hat{B}_{k-1}, \quad U_h = \mathcal{P}_{k-1}^d (\R^n), \quad \Gamma_h = \mathcal{P}_{k-1}^d(\R_{\skw}^{n \times n}),
\end{align*}
for $k \geq 2$. We apply the framework with $\Gamma_h^0 = \mathcal{P}_0^d(\R_{\skw}^{n \times n})$ and $\Xi_h = \hat{B}_{k-1}$. Then $(\hat{B}_k, \hat{\mathcal{P}}_k^d(\R_{\skw}^{n \times n}))$ is Stokes stable by Lemma \ref{lemma:bub-stable} and $(\Sigma_h, U_h, \Gamma_h^0)$ is elasticity stable by Corollary \ref{cor:stable}. Moreover, $Q_h \chi \div S \xi = Q_h \skw \curl \xi$ for $\xi \in \Xi_h$ is orthogonal to $\Gamma_h^0$ by the definition of $\hat{B}_k$ and \eqref{eq:IBP}, so $(\Sigma_h, U_h, \Gamma_h)$ is elasticity stable. 

The AFW elements \cite{AFW07} are 
\begin{align*}
\Sigma_h = BDM_{k}(\R^n) , \quad U_h = \mathcal{P}_{k-1}^d (\R^n), \quad \Gamma_h = \mathcal{P}_{k-1}^d(\R_{\skw}^{n \times n}), 
\end{align*}
for $k \geq 1$. The stability of these elements for $k = 1$ follows as a corollary of Lemma \ref{lemma:afw1}. For $k \geq 2$ it follows from the stability of CGG elements because $\curl \hat{B}_{k-1} \subset \mathcal{P}_{k}^d(\R^{n \times n}) \cap H(\div; \R^n)$ and then $RTN_{k}(\R^n) + \curl \hat{B}_{k-1} \subset BDM_{k}(\R^n)$.

\subsubsection{The Gopalakrishnan--Guzm\'{a}n (GG) and the Stenberg elements} \label{eg5}
The GG elements \cite{GG10} are 
\begin{align*}
\Sigma_h = BDM_{k}(\R^n) + \curl \hat{B}_{k} , \quad U_h = \mathcal{P}_{k-1}^d (\R^n), \quad \Gamma_h = \mathcal{P}_{k}^d(\R_{\skw}^{n \times n}),
\end{align*}
for $k \geq 1$. Let $\Gamma_h^0 = \mathcal{P}_0^d(\R_{\skw}^{n \times n})$ and $\Xi_h = \hat{B}_{k}$. Then $(\Xi_h, \Gamma_h^1)$ is Stokes stable by Lemma \ref{lemma:bub-stable} and $(\Sigma_h, U_h, \Gamma_h^0)$ is elasticity stable by Corollary \ref{cor:stable}. Moreover, $Q_h \chi \div S \Xi_h$ is orthogonal to $\Gamma_h^0$, so $(\Sigma_h, U_h, \Gamma_h)$ is elasticity stable. By checking the degree of polynomials one can see that $\curl \hat{B}_{k-1} \subset BDM_k(\R^n)$ holds, so only a small part of $\curl \hat{B}_{k}$ is necessary for $\Sigma_h$. See \cite{GG10} for degrees of freedom of $\Sigma_h$ for implementation.

The Stenberg elements \cite{Sten88} are 
\begin{align*}
\Sigma_h = BDM_{k}(\R^n) + \curl {B}_{k} , \quad U_h = \mathcal{P}_{k-1}^d (\R^n), \quad \Gamma_h = \mathcal{P}_{k}^d(\R_{\skw}^{n \times n}),
\end{align*}
for $k \geq 1$. The stability can be proved in a way similar to the GG elements by Lemma \ref{lemma:bub-stable} and Corollary \ref{cor:stable}. As noticed in \cite{Guzman11}, the $k=1$ case, which is not included in \cite{Sten88}, also gives a stable mixed method.

\subsubsection{Elements with barycentric subdivision grids} \label{eg6} Let $\mathcal{M}_h$ be a shape-regular mesh of $\Omega$ and $\mathcal{T}_h$ be the mesh obtained by dividing each element in $\mathcal{M}_h$ into $n+1$ subelements by connecting the vertices of the element to its barycenter. Define
\begin{align*}
\Sigma_h = BDM_{k}(\R^n), \quad U_h = \mathcal{P}_{k-1}^d (\R^n), \quad \Gamma_h = \mathcal{P}_{k}^d(\R_{\skw}^{n \times n}),
\end{align*}
for $k \geq 1$ on $\mathcal{T}_h$. As is noticed in \cite{GG10}, this triple is elasticity stable when $n=2$ and when $n=3$ with $k \geq 2$  due to the stable finite elements for the Stokes equation in \cite{MR2691498,MR2114637}. We will show that it can be extended to $n=3$, $k=1$ case. For $M \in \mathcal{M}_h$ we define  
\begin{align*}
\mathcal{P}_{2,0}^c(M) &= \{ \xi \in H^1(M; \R^{3 \times 3}) \,:\, \xi|_T \in \mathcal{P}_{2}(T; \R^{3 \times 3}), \xi |_{\pd M} = 0 \text{ for }T \subset M, T \in \mathcal{T}_h \},
\end{align*}
and 
\begin{align*}
\Xi_h &= \{ \xi \in \Xi \,:\, \xi|_M \in \mathcal{P}_{2,0}^c(M) \text{ for }M \in \mathcal{M}_h \}, \\
\Gamma_h^0 &= \{ \eta \in \Gamma \,:\, \eta|_M = \mathcal{P}_0^d (M;\R_{\skw}^{n \times n}) \text{ for } M \in \mathcal{M}_h \}.
\end{align*}
Then the restriction of $\Gamma_h^1$ on a macroelement $M$ is the space of mean-value zero polynomials on $M$, so $(\Xi_h, \Gamma_h^1)$ is Stokes stable by Lemma 2 in \cite{MR2114637}. Recall that $\Xi_h$ is a space of continuous piecewise quadratic polynomials on $\mathcal{T}_h$ and one can see that $\curl \Xi_h \subset \Sigma_h$. Furthermore, $\curl \Xi_h$ is orthogonal to $\Gamma_h^0$ due to the integration by parts because $\xi \in \Xi_h$ is a bubble-like function on each macroelement $M \in \mathcal{M}_h$. Note that this is not the case if we simply take $\Gamma_h^0$ as the space of piecewise constants on $\mathcal{T}_h$. Since $(\Sigma_h, U_h, \Gamma_h^0)$ is elasticity stable by Lemma \ref{lemma:afw1}, so is $(\Sigma_h, U_h, \Gamma_h)$. 

A completely analogous argument can be used to show that 
\begin{align*}
\Sigma_h = RTN_{k+1}(\R^n), \quad U_h = \mathcal{P}_{k}^d (\R^n), \quad \Gamma_h = \mathcal{P}_{k}^d(\R_{\skw}^{n \times n}), \qquad k \geq 1,
\end{align*}
is elasticity stable. This readily implies that the finite element family for dual-mixed form of steady Navier--Stokes equations in \cite{MR3056409} can be extended to $k=1$, $n=3$ case.

\subsubsection{Triangular elements with variable degree shape functions} \label{eg6-2} Aiming to $hp$-adaptive methods, Demkowicz and Qiu investigated $h$-stability of the AFW elements with variable degree shape functions. We refer to \cite{MR2557491,MR2792388} for a precise definition of variable degree finite element spaces. Their approach is extending the elasticity complex framework in \cite{AFW07} to variable degree finite element spaces with a suitable interpolation operator satisfying commuting diagram properties for variable degree polynomial spaces. Construction of such an interpolation operator is difficult. In fact, an indirect way to prove its existence in \cite{MR2557491,MR2792388} is nontrivial and the argument seems to be highly sensitive to shape functions of elements. However, although $p$-stability is still missing, an $h$-stability result for variable degree elements (with bounded highest degree) is readily obtained in our framework by taking $\Gamma_h^0$ as the piecewise constant space, $(\Sigma_h, U_h, \Gamma_h^1)$ as suitable variable degree finite element spaces, and by repeating the previous stability proof of the AFW elements. Moreover, the error estimate obtained by \eqref{eq:imp-err} is better than that in \cite{MR2557491,MR2792388}. Lastly, a completely analogous argument will give $h$-stable variable degree elements based on CGG and GG elements.

\subsubsection{Rectangular elements with discontinuous $\Gamma_h$} \label{eg7} There are not much literature on rectangular/quadrilateral weak symmetry elements with discontinuous $\Gamma_h$ \cite{Awanou12,JL1,JL2,AAQ13}. Unfortunately the properties which are crucial in our framework do not hold on quadrilateral meshes. For instance, {\bf (A1)} fails in general for quadrilateral $H(\div)$ and $L^2$ element pairs, and there is no interpolation operator $\Pi_h$ as in Theorem \ref{thm:error-estm}, so we will only consider rectangular meshes. We remark that a family of elements on quadrilateral meshes, say the Arnold--Awanou--Qiu elements, have been developed in \cite{AAQ13}. 

It is known that the triple $(rBDM_1(\R^n), \mathcal{P}_0^d(\R^n), \mathcal{P}_0^d(\R_{\skw}^{n \times n}))$ is elasticity stable \cite{Awanou12}. 
However, these elements are not readily extended to higher orders as in the triangular AFW elements. The higher order elements in \cite{Awanou12} are $(rRTN_{k+2}, \mathcal{Q}_{k+1}^d\mathcal{Q}_{k}^d)$ with $k \geq 1$, which are not rectangular analogues of the AFW elements. The Arnold--Awanou--Qiu family \cite{AAQ13} on rectangular meshes is $(rRTN_{k+1}, \mathcal{Q}_k^d, \mathcal{P}_k^d)$ with $k \geq 1$. As discussed in \cite{AAQ13}, this family can be analyzed with the framework using Stokes stable pairs $(\mathcal{Q}_{k+1}^c, \mathcal{P}_k^d)$, $k \geq 1$ and elasticity stable pairs $(rRTN_{k+1}, \mathcal{Q}_k^d, 0)$, $k \geq 1$. 
Existence of similar higher order elements in three dimensions is not clear.

\begin{table}[t]  
\caption{Convergence rates of errors and orders of approximation of finite elements for some  weak symmetry elements.} \label{mixed-approx}
\centering
\begin{tabular}{>{\small}c >{\small}c >{\small}c >{\small}c >{\small}c >{\small}c}
\hline
\multirow{2}{*}{elements}	&  \multicolumn{3}{>{\small}c}{convergence rate\;[order of approximation]} & \multirow{2}{*}{order } & \multirow{2}{*}{$n$ } \\ 
						&$\| \sigma - \sigma_h \|_0$	& $\| P_h u - u_h \|_0$	&  $\| \gamma - \gamma_h \|_0$		&	\\  \hline \hline
PEERS \eqref{eg1}		& $1\;[1]$		& $1\;[1]$	& $1\;[1]$ 		& $1 $ & $2$ \\ 
THB \eqref{eg2}		& $k\;[k]$		& $k\;[k-1]$	& $k\;[k]$ 		& $k \geq 2$ & $2/3$ \\ 
2D Rect. \eqref{eg3}		& $2\;[2]$		& $2\;[1]$	& $2\;[2]$ 		& $2$ & $2$ \\ 
AFW \eqref{eg4}			& $k\;[k+1]$	& $k\;[k]$	& $k\;[k]$		& $k \geq 1 $ & $2/3$ \\ 
CGG \eqref{eg4}			& $k\;[k]$		& $k\;[k]$	& $k\;[k]$		& $k \geq 2 $ & $2/3$ \\ 
Stenberg \eqref{eg5}	& $k\;[k]$		& $k\;[k-1]$	& $k\;[k]$ 		& $k \geq 2 $ & $2/3$ \\ 
GG \eqref{eg5}			& $k\;[k]$		& $k\;[k-1]$	& $k\;[k]$		& $k \geq 2 $ & $2/3$ \\ 
barycentric BDM \eqref{eg6}	& $k\;[k]$		& $k\;[k-1]$	& $k\;[k]$		& $k \geq 2 $ & $2/3$ \\ 
barycentric RTN \eqref{eg6}	& $k\;[k]$		& $k\;[k]$	& $k\;[k]$		& $k \geq 2 $ & $2/3$ \\ 
Awanou low \eqref{eg7}	& $1\;[2]$		& $1\;[1]$	& $1\;[1]$		& $1 $ & $2/3$ \\ 
Awanou high \eqref{eg7}	& $k\;[k+1]$		& $k\;[k+1]$	& $k\;[k]$		& $k \geq 2 $ & $2$ \\ 
rAAQ \eqref{eg7}	& $k\;[k]$		& $k\;[k]$	& $k\;[k]$		& $k \geq 2 $ & $2$ \\
rGG \eqref{eg7}	& $k\;[k]$		& $k\;[k-1]$	& $k\;[k]$		& $k \geq 2 $ & $2/3$ \\
\hline
\multicolumn{5}{l}{\footnotesize THB = Taylor--Hood based elements, 2D Rect. = 2D rectangular element} \\
\multicolumn{5}{l}{\footnotesize AFW = Arnold--Falk--Winther, CGG = Cockburn--Gopalakrishnan--Guzm\'{a}n } \\
\multicolumn{5}{l}{\footnotesize GG = Gopalakrishnan--Guzm\'{a}n, Barycentric = the elements on barycentric meshes} \\
\multicolumn{5}{l}{\footnotesize rAAQ = Arnold--Awanou--Qiu family on rectangular meshes, rGG = rectangular GG}
\end{tabular}                           
\end{table}

As the last example, we propose a family of new rectangular elements, say, rectangular GG elements. Following the construction of the GG elements we define $\hat{B}^r(\eta)$ for $\eta \in \hat{\mathcal{P}}_k^d(\R_{\skw}^{n \times n})$ as in \eqref{eq:B-eta} but with a standard rectangular quartic bubble function $b_T^r$ in two dimensions, and a rectangular matrix bubble function ${\bf b}_T^r$ in three dimensions. We refer to \cite{JL1} for precise definitions of $b_T^r$, ${\bf b}_T^r$, and a proof of \eqref{eq:IBP} with $b_T^r$ and ${\bf b}_T^r$. The space $\hat{B}_k^r$ is defined correspondingly. The rectangular GG elements are defined by 
\begin{align*}
\Sigma_h = rBDM_{k}(\R^n) + \curl (\hat{B}_{k}^r) , \qquad U_h = \mathcal{P}_{k-1}^d (\R^n), \qquad \Gamma_h = \mathcal{P}_{k}^d(\R_{\skw}^{n \times n}),
\end{align*}
for $k \geq 1$. The stability of these elements can be proved with an analysis similar to the GG elements by taking $\Gamma_h^0 = \mathcal{P}_0^d(\R_{\skw}^{n \times n})$ and $\Xi_h = \hat{B}_k^r$. By counting degrees of polynomials one can see that $\curl (\hat{B}_{k-2}^r)$ is included in $rBDM_{k}(\R^n)$ since $rBDM_{k}$ contains all polynomials of degree $k$ (cf. (3.29) and (3.30) in \cite{BFBook}), so the number of degrees of freedom of $\Sigma_h$ can be reduced because only a part of $\curl (\hat{B}_{k}^r)$ is necessary for $\Sigma_h$.

\section{Conclusion}
We presented a framework for analysis of mixed methods for elasticity with weakly symmetric stress by generalizing the approach in \cite{MR2449101}. The framework enables us to analyze many existing weak symmetry elements in a unified way with elementary techniques. We also showed that some new stable mixed finite elements can be easily obtained from it. 


\begin{thebibliography}{10}

\bibitem{AmaraThomas}
M.~Amara and J.~M. Thomas.
\newblock Equilibrium finite elements for the linear elastic problem.
\newblock {\em Numer. Math.}, 33(4):367--383, 1979.

\bibitem{ABD84}
Douglas~N. Arnold, Franco Brezzi, and Jr.~Jim Douglas.
\newblock P{EERS}: a new mixed finite element for plane elasticity.
\newblock {\em Japan J. Appl. Math.}, 1:347--367, 1984.

\bibitem{ABF84}
Douglas~N. Arnold, Franco Brezzi, and M.~Fortin.
\newblock A stable finite element for the {S}tokes equations.
\newblock {\em Calcolo}, 21(4):337--344 (1985), 1984.

\bibitem{ADG84}
Douglas~N. Arnold, Jr.~Jim Douglas, and Chaitan~P. Gupta.
\newblock A family of higher order mixed finite element methods for plane
  elasticity.
\newblock {\em Numer. Math.}, 45(1):1--22, 1984.

\bibitem{AFW06}
Douglas~N. Arnold, Richard~S. Falk, and Ragnar Winther.
\newblock Finite element exterior calculus, homological techniques, and
  applications.
\newblock {\em Acta Numer.}, 15:1--155, 2006.

\bibitem{AFW07}
Douglas~N. Arnold, Richard~S. Falk, and Ragnar Winther.
\newblock Mixed finite element methods for linear elasticity with weakly
  imposed symmetry.
\newblock {\em Math. Comp.}, 76(260):1699--1723 (electronic), 2007.

\bibitem{Arnold-Lee}
Douglas~N. Arnold and Jeonghun~J. Lee.
\newblock Mixed methods for elastodynamics with weak symmetry.
\newblock {\em SIAM J. Numer. Anal.}, 52(6):2743--2769, 2014.

\bibitem{AAQ13}
DouglasN. Arnold, Gerard Awanou, and Weifeng Qiu.
\newblock Mixed finite elements for elasticity on quadrilateral meshes.
\newblock {\em Advances in Computational Mathematics}, pages 1--20, 2014.

\bibitem{Awanou12}
Gerard Awanou.
\newblock Rectangular mixed elements for elasticity with weakly imposed
  symmetry condition.
\newblock {\em Adv. Comput. Math.}, 38(2):351--367, 2013.

\bibitem{MR1269482}
D.~Boffi.
\newblock Stability of higher order triangular {H}ood-{T}aylor methods for the
  stationary {S}tokes equations.
\newblock {\em Math. Models Methods Appl. Sci.}, 4(2):223--235, 1994.

\bibitem{MR1442933}
Daniele Boffi.
\newblock Three-dimensional finite element methods for the {S}tokes problem.
\newblock {\em SIAM J. Numer. Anal.}, 34(2):664--670, 1997.

\bibitem{MR2449101}
Daniele Boffi, Franco Brezzi, and Michel Fortin.
\newblock Reduced symmetry elements in linear elasticity.
\newblock {\em Commun. Pure Appl. Anal.}, 8(1):95--121, 2009.

\bibitem{Brenner-Scott-book}
Susanne~C. Brenner and L.~Ridgway Scott.
\newblock {\em The Mathematical Theory of Finite Element Methods}.
\newblock Springer, {T}hird edition, 2008.

\bibitem{BFBook}
F.~Brezzi and M.~Fortin.
\newblock {\em Mixed and Hybrid Finite Element Methods}, volume~15 of {\em
  Springer Series in computational Mathematics}.
\newblock Springer, 1992.

\bibitem{BDM85}
Franco Brezzi, Jr.~Jim Douglas, and L.~D. Marini.
\newblock Two families of mixed finite elements for second order elliptic
  problems.
\newblock {\em Numer. Math.}, 47(2):217--235, 1985.

\bibitem{MR1098408}
Franco Brezzi and Richard~S. Falk.
\newblock Stability of higher-order {H}ood-{T}aylor methods.
\newblock {\em SIAM J. Numer. Anal.}, 28(3):581--590, 1991.

\bibitem{CGG10}
Bernardo Cockburn, Jayadeep Gopalakrishnan, and Johnny Guzm{\'a}n.
\newblock A new elasticity element made for enforcing weak stress symmetry.
\newblock {\em Math. Comp.}, 79(271):1331--1349, 2010.

\bibitem{Fraeijs}
B.~M.~Fraeijs de~Veubeke.
\newblock Stress function approach.
\newblock In {\em Proceedings of the World Congress on Finite Element Methods
  in Structural Mechanics}, volume~5, pages J.1 -- J.51. 1975.

\bibitem{Evans-book}
Lawrence~C. Evans.
\newblock {\em Partial differential equations}, volume~19 of {\em Graduate
  Studies in Mathematics}.
\newblock American Mathematical Society, Providence, RI, 1998.

\bibitem{FalkWS}
Richard~S. Falk.
\newblock Finite elements for linear elasticity.
\newblock In D.~Boffi and L.~Gastaldi, editors, {\em Mixed Finite Elements:
  Compatibility Conditions}, volume 1939. Springer, 2008.

\bibitem{MR1464150}
Mohamed Farhloul and Michel Fortin.
\newblock Dual hybrid methods for the elasticity and the {S}tokes problems: a
  unified approach.
\newblock {\em Numer. Math.}, 76(4):419--440, 1997.

\bibitem{GG10}
J.~Gopalakrishnan and J.~Guzm{\'a}n.
\newblock A second elasticity element using the matrix bubble.
\newblock {\em IMA J. Numer. Anal.}, 32(1):352--372, 2012.

\bibitem{Guzman11}
J.~Guzm{\'a}n.
\newblock A unified analysis of several mixed methods for elasticity with weak
  stress symmetry.
\newblock {\em J. Sci. Comput.}, 44(2):156--169, 2010.

\bibitem{MR3056409}
Jason~S. Howell and Noel~J. Walkington.
\newblock Dual-mixed finite element methods for the {N}avier-{S}tokes
  equations.
\newblock {\em ESAIM Math. Model. Numer. Anal.}, 47(3):789--805, 2013.

\bibitem{JL2}
Mika Juntunen and Jeonghun Lee.
\newblock A mesh-dependent norm analysis of low-order mixed finite element for
  elasticity with weakly symmetric stress.
\newblock {\em Math. Models Methods Appl. Sci.}, 24(11):2155--2169, 2014.

\bibitem{JL1}
Mika Juntunen and Jeonghun Lee.
\newblock Optimal second order rectangular elasticity elements with weakly
  symmetric stress.
\newblock {\em BIT}, 54(2):425--445, 2014.

\bibitem{morley89}
Mary~E. Morley.
\newblock A family of mixed finite elements for linear elasticity.
\newblock {\em Numer. Math.}, 55(6):633--666, 1989.

\bibitem{Nedelec80}
J.-C. N{\'e}d{\'e}lec.
\newblock Mixed finite elements in {${\bf R}^{3}$}.
\newblock {\em Numer. Math.}, 35(3):315--341, 1980.

\bibitem{Nedelec86}
J.-C. N{\'e}d{\'e}lec.
\newblock A new family of mixed finite elements in {${\bf R}^3$}.
\newblock {\em Numer. Math.}, 50(1):57--81, 1986.

\bibitem{MR2691498}
Jinshui Qin.
\newblock {\em On the convergence of some low order mixed finite elements for
  incompressible fluids}.
\newblock ProQuest LLC, Ann Arbor, MI, 1994.
\newblock Thesis (Ph.D.)--The Pennsylvania State University.

\bibitem{MR2557491}
Weifeng Qiu and Leszek Demkowicz.
\newblock Mixed {$hp$}-finite element method for linear elasticity with weakly
  imposed symmetry.
\newblock {\em Comput. Methods Appl. Mech. Engrg.}, 198(47-48):3682--3701,
  2009.

\bibitem{MR2792388}
Weifeng Qiu and Leszek Demkowicz.
\newblock Mixed {$hp$}-finite element method for linear elasticity with weakly
  imposed symmetry: stability analysis.
\newblock {\em SIAM J. Numer. Anal.}, 49(2):619--641, 2011.

\bibitem{RT75}
P.-A. Raviart and J.~M. Thomas.
\newblock A mixed finite element method for 2nd order elliptic problems.
\newblock In {\em Mathematical aspects of finite element methods ({P}roc.
  {C}onf., {C}onsiglio {N}az. delle {R}icerche ({C}.{N}.{R}.), {R}ome, 1975)},
  pages 292--315. Lecture Notes in Math., Vol. 606. Springer, Berlin, 1977.

\bibitem{MR725982}
Rolf Stenberg.
\newblock Analysis of mixed finite elements methods for the {S}tokes problem: a
  unified approach.
\newblock {\em Math. Comp.}, 42(165):9--23, 1984.

\bibitem{Sten86}
Rolf Stenberg.
\newblock On the construction of optimal mixed finite element methods for the
  linear elasticity problem.
\newblock {\em Numer. Math.}, 48(4):447--462, 1986.

\bibitem{Sten88}
Rolf Stenberg.
\newblock A family of mixed finite elements for the elasticity problem.
\newblock {\em Numer. Math.}, 53(5):513--538, 1988.

\bibitem{Sten14}
Rolf Stenberg.
\newblock Weakly symmetric mixed finite elements for linear elasticity.
\newblock In Assyr Abdulle, Simone Deparis, Daniel Kressner, Fabio Nobile, and
  Marco Picasso, editors, {\em Numerical Mathematics and Advanced Applications
  - ENUMATH 2013}, volume 103 of {\em Lecture Notes in Computational Science
  and Engineering}, pages 3--18. Springer International Publishing, 2015.

\bibitem{MR0339677}
C.~Taylor and P.~Hood.
\newblock A numerical solution of the {N}avier-{S}tokes equations using the
  finite element technique.
\newblock {\em Internat. J. Comput. \& Fluids}, 1(1):73--100, 1973.

\bibitem{MR2114637}
Shangyou Zhang.
\newblock A new family of stable mixed finite elements for the 3{D} {S}tokes
  equations.
\newblock {\em Math. Comp.}, 74(250):543--554, 2005.

\end{thebibliography}
\vspace{.125in}

\bibliographystyle{simunsrtcompress}

\end{document}